\numberwithin{equation}{section}
\newenvironment{proof}{\noindent {\em {Proof}}.}{$\square$
\medskip}
\newtheorem{lemma}{Lemma}
\newtheorem{theorem}{Theorem}
\newtheorem{problem}{Problem}
\newcommand{\Aut}{{\rm Aut}}
\newcommand{\comment}[1]{}
\newif\ifpdf
\begin{document}

\title{Rhomboidal $C_4C_8$ toris which are Cayley graphs}
\author{F.~Afshari\footnote {E-Mail:~fateme-afshari64@yahoo.com },~~ M. Maghasedi\footnote{Corresponding author}\footnote {E-Mail: maghasedi@kiau.ac.ir}\\
{\normalsize Department of Mathematics, Karaj Branch, Islamic Azad
University, Karaj, Iran}}
\date{}
\maketitle

\begin{abstract}
A $C_4C_8$ net is a trivalent decoration made by alternating
squares $C_4$ and octagons $C_8$. It can cover either a cylinder or a torus. In this paper
we determine rhomboidal $C_4C_8$ toris which are Cayley graphs.
\end{abstract}
\noindent\textbf{Keywords:} Cayley graph, $C_4C_8(R)$ nanotori.

\section{Introduction and preliminaries}
For notation and terminology not defined here we refer to
\cite{b}. Cayley \cite{c}, introduced the definition of Cayley
graph and over the time this graph has been investigated as an
interesting model of graphs by researchers. Let $G$ denote a
finite group and let $S$ be a subset of $G$ satisfying: (1)
$1\not \in S$, where $1$ denotes the identity of $G$, and (2)
$s\in S$ implies $s^{-1}\in S.$ The \textit{Cayley graph} $Cay(G,
S)$ on $G$ with connection set $S$ has the elements of $G$ as its
vertices and an edge joining $g$ to $sg$ for all $g \in G$ and
$s\in S$. Determining whether a graph is Cayley graph is one of
the most important problems. In particular, determining chemical Cayley graphs is very important. In 1994, Chung et al. showed that the
Buckyball, a soccer ball-like molecule, consisting of 60 carbon atoms is a Cayley graph on $A_5$, the alternating group on $5$ symbols
 \cite{CKS}, see also \cite[Example 7.16]{Chung}. Then in 2006, Meng et al. proved that
the only fulleren in which is a Cayley graph is the Buckyball \cite{MHZ}. Also it is shown in \cite{ad} that honeycomb toroidal graphs are Cayley
graphs. In this paper we consider the problem for rhomboidal $C_4C_8$ tori.

Carbon nanotubes form an interesting class of carbon
nanomaterials. These can be imagined as rolled sheets of graphite
about different axes. Iijima \cite{i}, in 1991 discovered carbon
nanotubes as multi-walled structures. Carbon nanotubes show
remarkable mechanical properties. A $C_4C_8$ net is a trivalent
decoration made by alternating squares $C_4$ and octagons $C_8$.
The rhomboidal $C_4C_8$ naotori $TRC_4C_8(R)[m,n]$ is constructed from squares and
octagons. Figure 1 in next page shows a $TRC_4C_8(R)[m,n]$ nanotori.

\begin{figure}[h]
\centering

\includegraphics[scale=0.5]{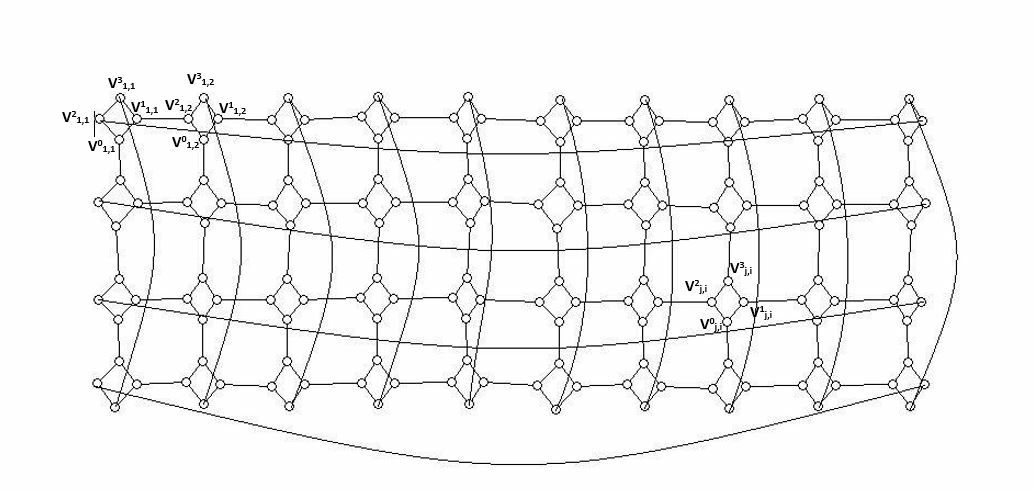}

\caption{$TRC_4C_8(R)[m,n]$ nanotori}

\end{figure}

Throughout this paper $\Gamma=TRC_4C_8(R)[m,n]$,
where $m$ is the number of rows and $n$ is the number of columns.
Thus $V(\Gamma)=\{v_{j,i}^{t}: 1\leq i\leq m, 1\leq j\leq n,
t=0,1,2,3\}$ and $$E(\Gamma)=\{v_{j,i}^{0}v_{j,i}^{1},
v_{j,i}^{0}v_{j,i}^{2},v_{j,i}^{0}v_{j+1,i}^{3},
v_{j,i}^{1}v_{j,i}^{0},v_{j,i}^{1}v_{j,i}^{3},$$
$$v_{j,i}^{1}v_{j,i+1}^{2},v_{j,i}^{2}v_{j,i}^{0},v_{j,i}^{2}v_{j,i}^{3},v_{j,i}^{2}v_{j,i-1}^{1},v_{j,i}^{3}v_{j,i}^{1},v_{j,i}^{3}v_{j,i}^{2},
v_{j,i}^{3}v_{j-1,i}^{0}:1\leq i\leq m, 1\leq j\leq n\},$$ where the sum/minus are in modulo
$m$ and the second sum/minus are in modulo $n$. Observe that
$\Gamma$ is a connected cubic graph of order $4mn$ and has $6mn$ edges.\

Our aim in this paper is to answer the following problem.

\begin{problem}
For which m,n, $TRC_4C_8(R)[m,n]$ is a Cayley graph?
\end{problem}

We use the following well-known theorem of Sabidussi.

\begin{theorem}[Sabidussi, \cite{s}]\label{Sabidussi}
Let $\Gamma$ be a graph. Then $\Gamma$ is a Cayley graph if and
only if $\Aut(\Gamma)$ contains a regular subgroup.
\end{theorem}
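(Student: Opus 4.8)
The statement is Sabidussi's classical characterization, and being an ``if and only if'' it splits into two implications, each of which I would prove by an explicit construction.

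For the forward implication, suppose $\Gamma=Cay(G,S)$, so that $V(\Gamma)=G$ and the edges are exactly the pairs $\{g,sg\}$ with $s\in S$, matching this paper's convention. The plan is to exhibit a concrete regular subgroup of $\Aut(\Gamma)$, namely the right regular representation. For each $h\in G$ define $\rho_h\colon G\to G$ by $\rho_h(g)=gh$. First I would check that each $\rho_h$ is a graph automorphism: an edge $\{g,sg\}$ is sent to $\{gh,sgh\}=\{gh,s(gh)\}$, again an edge with the same $s\in S$, and $\rho_h$ is a bijection with inverse $\rho_{h^{-1}}$. Then $R(G)=\{\rho_h:h\in G\}$ is a subgroup of $\Aut(\Gamma)$, and it acts regularly on $V(\Gamma)=G$: it is transitive since $\rho_{g^{-1}g'}(g)=g'$, and the stabilizer of any vertex is trivial since $gh=g$ forces $h=1$. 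Hence $\Aut(\Gamma)$ contains a regular subgroup.

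For the converse, suppose $R\le\Aut(\Gamma)$ is regular. Fix a base vertex $v_0$; regularity makes $r\mapsto r(v_0)$ a bijection from $R$ onto $V(\Gamma)$, and I would write $g_v$ for the unique element of $R$ with $g_v(v_0)=v$. The natural candidate for the connection set is $S=\{s\in R: s(v_0)\sim v_0\}$. I would then verify the two connection-set axioms: $1\notin S$ because $\Gamma$ has no loops, and $S=S^{-1}$ because if $s(v_0)\sim v_0$ then applying the automorphism $s^{-1}$ gives $v_0\sim s^{-1}(v_0)$, so $s^{-1}\in S$.

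It remains to identify $\Gamma$ with $Cay(R,S)$, and this is the step I expect to be the only delicate one, since one must respect the left-multiplication convention ($g\sim sg$) used here. A short computation shows that for $u,v\in V(\Gamma)$ one has $u\sim v$ in $\Gamma$ if and only if $g_u^{-1}g_v\in S$ (apply the automorphism $g_u^{-1}$ to the edge $\{u,v\}$ and read off adjacency to $v_0$). Under the left convention, $\{a,b\}$ is an edge of $Cay(R,S)$ exactly when $ba^{-1}\in S$, so the naive labelling $v\mapsto g_v$ produces the ``opposite'' relation. The remedy is to use the map $v\mapsto g_v^{-1}$: it is a bijection $V(\Gamma)\to R$, and since $S=S^{-1}$ the condition $g_u^{-1}g_v\in S$ is equivalent to $g_v^{-1}g_u\in S$, which is precisely the edge relation of $Cay(R,S)$ on the labels $g_u^{-1},g_v^{-1}$. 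Thus this map is a graph isomorphism and $\Gamma$ is a Cayley graph. Apart from this convention bookkeeping the argument is entirely routine.
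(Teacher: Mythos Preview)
Your proof is correct and is essentially the standard argument for Sabidussi's theorem. Note, however, that the paper does not actually prove this statement: it is quoted as a well-known result with a reference to Sabidussi's original paper, and is used as a black box in the proof of Theorem~\ref{cay}. There is therefore nothing in the paper to compare your argument against, but what you wrote is a clean self-contained proof, including the careful handling of the left-multiplication edge convention via the relabelling $v\mapsto g_v^{-1}$.
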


\section{Main results}

For $m\neq n $, we have some evidences that $TRC_4C_8(R)[m,n]$
may not be a Cayley graph. For for $m=3$ and $n=2,$ we computed
by GAP, that $\Aut(TRC_4C_8(R)[m,n])\cong D_{8}$ and has no
regular subgroup of order $24.$ In the rest of this section we
will prove that $TRC_4C_8(R)[m,n]$ is a Cayley graph whenever $
m=n.$

\begin{lemma}\label{lem1}
Let $ \Gamma=TRC_{4}C_{8}(R)[n,n] $ and $ g_{k}: v(\Gamma)
\mapsto v(\Gamma) $, $ 1 \leq k \leq 4 $,are the following maps:
\begin{align*}
 g_{1}&: v_{j,i}^{t} \mapsto v_{j,(i-1)}^{t}, t=0,1,2,3 \\
 g_{2}&: v_{j,i}^{t} \mapsto v_{(j+1),i}^{t} , t=0,1,2,3\\
 g_{3}&: v_{j,i}^{3} \mapsto v_{i,j}^{2}, ~ v_{j,i}^{2} \mapsto v_{i,j}^{3} ,~ v_{j,i}^{1} \mapsto v_{i,j}^{0},~v_{j,i}^{0} \mapsto v_{i,j}^{1}\\
 g_{4}&: v_{j,i}^{3} \mapsto v_{n-j+1,n-i+1}^{0}, ~ v_{j,i}^{2} \mapsto v_{n-j+1,n-i+1}^{1} ,~ v_{j,i}^{1} \mapsto v_{n-j+1,n-i+1}^{2},~v_{j,i}^{0} \mapsto v_{n-j+1,n-i+1}^{3}
\end{align*}
then $g_{k}\in\Aut(\Gamma)$, for all $k$.
\end{lemma}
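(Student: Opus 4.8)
The plan is to handle the four maps separately and, for each, to establish the two conditions defining a graph automorphism: that $g_k$ is a bijection of $V(\Gamma)$ and that it sends edges to edges. Bijectivity is immediate throughout. Indeed $g_1$ and $g_2$ are induced by the cyclic shifts $i\mapsto i-1$ and $j\mapsto j+1$ of $\{1,\dots,n\}$ modulo $n$, with inverses the opposite shifts; and $g_3$ and $g_4$ are involutions, as one sees by composing each with itself (for $g_4$ using $n-(n-j+1)+1=j$). Because $\Gamma$ is finite, a vertex-bijection that maps every edge to an edge carries $E(\Gamma)$ injectively onto a set of $|E(\Gamma)|$ edges, hence onto all of $E(\Gamma)$; so it automatically preserves non-adjacency as well, and it suffices to verify that each $g_k$ preserves adjacency in the forward direction only.

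Before checking adjacency I would first cut the defining edge list down to its six genuinely distinct undirected types, since that list records each edge twice. Representatives are the four \emph{intra-cell} edges $v_{j,i}^0v_{j,i}^1$, $v_{j,i}^0v_{j,i}^2$, $v_{j,i}^1v_{j,i}^3$, $v_{j,i}^2v_{j,i}^3$, which form the square $C_4$ sitting at cell $(j,i)$, together with the \emph{vertical} inter-cell edge $v_{j,i}^0v_{j+1,i}^3$ and the \emph{horizontal} inter-cell edge $v_{j,i}^1v_{j,i+1}^2$. The whole verification then reduces to applying each $g_k$ to these six representatives and recognizing every image as an edge of one of the six types.

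For $g_1$ and $g_2$ this is routine: they only translate one index, so each edge type is carried to an edge of the \emph{same} type at a shifted cell, and adjacency is preserved by the translation invariance of the construction. The real content is in $g_3$ and $g_4$, where the labels $t\in\{0,1,2,3\}$ get permuted. For $g_3$, the transposition $(j,i)\mapsto(i,j)$ coupled with the involution $0\leftrightarrow 1,\ 2\leftrightarrow 3$ permutes the four intra-cell edges among themselves at cell $(i,j)$, while it \emph{swaps} the vertical edge $v_{j,i}^0v_{j+1,i}^3$ with the horizontal edge $v_{i,j}^1v_{i,j+1}^2$ (and conversely) --- exactly the exchange of the two edge orientations one expects from interchanging the two coordinates. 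For $g_4$, the point reflection $(j,i)\mapsto(n-j+1,n-i+1)$ together with the reversal $0\leftrightarrow 3,\ 1\leftrightarrow 2$ again permutes the intra-cell edges at the reflected cell and returns each inter-cell edge to an edge of its own orientation.

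The one step that demands care --- and where the bookkeeping is most error-prone --- is the action of $g_4$ on the inter-cell edges, because the reflection $j\mapsto n-j+1$ reverses the direction of the unit shifts. Writing $j'=n-j+1$ and $i'=n-i+1$, applying $g_4$ to $v_{j,i}^0v_{j+1,i}^3$ sends the two endpoints to rows $j'$ and $n-j=j'-1$, and one must then read the image $\{v_{j'-1,i'}^0,\,v_{j',i'}^3\}$ as the vertical edge $v_{j'-1,i'}^0v_{(j'-1)+1,i'}^3$; the identity $n-i=i'-1$ handles the horizontal edge the same way. Once these index shifts are pinned down, all six checks close for each of the four maps, yielding $g_k\in\Aut(\Gamma)$ for every $k$.
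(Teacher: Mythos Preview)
Your proof is correct and follows essentially the same approach as the paper: both establish bijectivity briefly and then verify adjacency preservation by direct computation, the only difference being organizational --- you check the six edge types one by one, while the paper fixes a vertex (taking $t=3$ ``without loss of generality'') and checks that each of its three neighbours is carried to a neighbour of its image. Your edge-type bookkeeping is arguably cleaner, and your explicit handling of the index shift in $g_4$ on the inter-cell edges spells out a step the paper leaves implicit, but the substance is the same.
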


\begin{proof}
We show that $g_{k}\in\Aut(\Gamma) $, $ 1\leq k \leq 4 $. It is
clear that $ g_{k} $ is a bijection for $ 1\leq k \leq 4$. Let
$v\in V(\Gamma)$ be an arbitrary vertex and $u\in N[v]$. We show
that $g_k(u)\in N[g_k(v)]$ for $1\leq k\leq 4$. Assume that
$v=v_{j,i}^t$, where $t\in\{0,1,2,3\}$, $j\in\{1,\cdots,n\}$ and
$i\in\{1,\cdots,n\}$. Note that
$N[v_{j,i}^3]=\{v_{j-1,i}^0,v_{j,i}^1,v_{j,i}^2\}$,
$N[v_{j,i}^2]=\{v_{j,i}^3,v_{j,i-1}^1,v_{j,i}^0\}$,
$N[v_{j,i}^1]=\{v_{j,i}^3,v_{j,i+1}^2,v_{j,i}^0\}$,
$N[v_{j,i}^0]=\{v_{j+1,i}^3,v_{j,i}^2,v_{j,i}^1\}$. Without loss
of generality, assume that $t=3$, and thus $v=v_{j,i}^3$. It
follows that $g_1(v)=v_{j,i-1}^3$, $g_2(v)=v_{j+1,i}^3$,
$g_3(v)=v_{i,j}^2$ and $g_4(v)=v_{n-j+1,n-i+1}^0$. Furthermore,
$u\in \{v_{j-1,i}^0,v_{j,i}^1,v_{j,i}^2\}$. Assume that
$u=v_{j-1,i}^0$. Then $g_1(u)=v_{j-1,i-1}^0$, $g_2(u)=v_{j,i}^0$,
$g_3(u)=v_{i,j-1}^1$ and $g_4(u)=v_{n-j+2,n-i+1}^3$. Thus for
$1\leq k\leq 4$, $g_k(u)\in N[g_k(v)]$, and so $g_k$ preserves
the adjacency. If $u\in \{v_{j,i}^1,v_{j,i}^2\}$, then similarly
we obtain that $g_k$ preserves the adjacency, for $1\leq k\leq
4$. Consequently, $g_k\in \Aut(\Gamma)$, for $1\leq k\leq 4$.
\end{proof}

\begin{lemma}\label{lem2}
Keeping the notations of Lemma \ref{lem1}, we have
\begin{align*}
&g_1^n=g_2^n=g_3^2=g_4^2=1,\\
&g_1g_2=g_2g_1,~ g_3g_4=g_4g_3, ~ g_1g_4=g_4g_1^{-1}, ~
g_2g_4=g_4g_2^{-1}, ~ g_2g_3=g_3g_1^{-1}.
\end{align*}
\end{lemma}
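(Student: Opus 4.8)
The plan is to verify each identity by evaluating both sides on an arbitrary vertex $v_{j,i}^t$ and comparing images. Every $g_k$ acts by transforming the index pair $(j,i)$ through an affine map of $\mathbb{Z}_n\times\mathbb{Z}_n$ together with a permutation of the superscript $t$, and these two actions are independent of one another, so each composition splits into a purely index check (carried out modulo $n$) and a purely superscript check. On indices, $g_1$ sends $(j,i)\mapsto(j,i-1)$, $g_2$ sends $(j,i)\mapsto(j+1,i)$, $g_3$ swaps $(j,i)\mapsto(i,j)$, and $g_4$ reflects $(j,i)\mapsto(n-j+1,n-i+1)$; on superscripts $g_1,g_2$ act trivially, $g_3$ induces $\sigma=(0\,1)(2\,3)$, and $g_4$ induces $t\mapsto 3-t$, i.e.\ $\tau=(0\,3)(1\,2)$. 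Throughout I read a product $gh$ as the composition $g\circ h$, applying $h$ first.

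First I would dispose of the orders. Applying $g_1$ (resp.\ $g_2$) $n$ times shifts $i$ (resp.\ $j$) by $n\equiv 0\pmod n$, so $g_1^n=g_2^n=1$. The index maps of $g_3$ and $g_4$ are involutions, the swap obviously and the reflection because $n-(n-j+1)+1=j$, while the superscript permutations $\sigma$ and $\tau$ are involutions as well; hence $g_3^2=g_4^2=1$. Next, $g_1g_2=g_2g_1$ is immediate since the two translations act on independent index coordinates and fix superscripts.

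For $g_3g_4=g_4g_3$ the index parts agree because the coordinate swap commutes with the reflection $(j,i)\mapsto(n-j+1,n-i+1)$, and the superscript parts agree because $\sigma$ and $\tau$ commute, their product in either order being $(0\,2)(1\,3)$. I would record this by the one-line check that applying $g_4$ then $g_3$, and applying $g_3$ then $g_4$, both send $v_{j,i}^t$ to $v_{n-i+1,n-j+1}^{t'}$ with the common value $t'=\sigma(3-t)=3-\sigma(t)$.

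The remaining three identities are the genuinely twisted ones, and this is where the bookkeeping is most delicate: the reflection of $g_4$ reverses the direction of a translation, conjugating $g_1$ into $g_1^{-1}$ and $g_2$ into $g_2^{-1}$, while the swap of $g_3$ carries the $j$-translation $g_2$ to the $i$-translation, which under our sign convention is $g_1^{-1}$ (recall $g_1$ lowers $i$, so $g_1^{-1}$ raises it). For $g_1g_4=g_4g_1^{-1}$ I would compute $v_{j,i}^t\xrightarrow{g_4}v_{n-j+1,n-i+1}^{3-t}\xrightarrow{g_1}v_{n-j+1,n-i}^{3-t}$ on the left and $v_{j,i}^t\xrightarrow{g_1^{-1}}v_{j,i+1}^t\xrightarrow{g_4}v_{n-j+1,n-i}^{3-t}$ on the right, and note they coincide; the relation $g_2g_4=g_4g_2^{-1}$ is identical with the roles of the two indices interchanged, each side sending $v_{j,i}^t$ to $v_{n-j+2,n-i+1}^{3-t}$. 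Finally $g_2g_3=g_3g_1^{-1}$ follows since $g_2g_3$ applies $g_3$ then $g_2$ to give $v_{j,i}^t\mapsto v_{i+1,j}^{\sigma(t)}$, which equals $g_3g_1^{-1}(v_{j,i}^t)$, obtained by applying $g_1^{-1}$ then $g_3$. The main obstacle is purely clerical: keeping the off-by-one reflection $i\mapsto n-i+1$ straight modulo $n$ and correctly tracking which translation gets inverted when conjugated past the swap or the reflection. Once the composition convention is fixed, every relation reduces to such a short index-and-superscript verification.
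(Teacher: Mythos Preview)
Your proof is correct and follows the same underlying idea as the paper---verify each relation by evaluating both sides on an arbitrary vertex---but you organise the computation more efficiently. The paper carries out the verification by a case split over the superscript $t\in\{0,1,2,3\}$, writing out both sides of $g_2g_3=g_3g_1^{-1}$ and $g_2g_4=g_4g_2^{-1}$ in each of the four cases and asserting that the remaining relations are handled similarly. Your observation that every $g_k$ factors as an affine map on the index pair $(j,i)$ times an independent permutation of the superscript lets you collapse those four cases into one: the superscript part is governed by the commuting involutions $\sigma=(0\,1)(2\,3)$ and $\tau=(0\,3)(1\,2)$, and the index part by the familiar fact that conjugating a translation by a reflection inverts it while conjugating by the coordinate swap interchanges the two translations. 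This buys you brevity and makes the group-theoretic structure $(\mathbb{Z}_n\times\mathbb{Z}_n)\rtimes(\mathbb{Z}_2\times\mathbb{Z}_2)$ visible already at the level of the verification, whereas the paper's case-by-case check establishes the relations but leaves that structure to be extracted afterwards in Lemma~\ref{lem3}.
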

\begin{proof}
Let $v=v_{j,i}^t\in V(\Gamma) $, where $t\in\{0,1,2,3\}$, $j\in\{1,\cdots,n\}$ and
$i\in\{1,\cdots,n\}$. By definition of
$\Gamma=TUC_{4}C_{8}(R)[n,n]$, we have
$N[v_{j,i}^3]=\{v_{j,i}^2,v_{j,i}^1,v_{j-1,i}^0\}$,
$N[v_{j,i}^2]=\{v_{j,i}^3,v_{j,i-1}^1,v_{j,i}^0\}$,
$N[v_{j,i}^1]=\{v_{j,i}^3,v_{j,i+1}^2,v_{j,i}^0\}$,
$N[v_{j,i}^0]=\{v_{j+1,i}^3,v_{j,i}^2,v_{j,i}^1\}$. It is straightforward to see that $g_1^n(v)=g_2^n(v)=g_3^2(v)=g_4^2(v)=v$. We show that
$g_2g_3(v)=g_3g_1^{-1}(v) $. If $t=3$, then
 \begin{align*}
  &g_2g_3(v)=g_2g_3(v_{j,i}^3)=g_2(v_{j,i}^2)=v_{i+1,j}^2\\
 & g_3g_1^{-1}(v_{j,i}^3)=g_3(v_{j,i-(-1)}^3)=g_3(v_{j,i+1}^3)=v_{i+1,j}^2
\end{align*}
as desired. If $t=2$, then
\begin{align*}
  &g_2g_3(v)=g_2g_3(v_{j,i}^2)=g_2(v_{i,j}^3)=v_{i+1,j}^3\\
 & g_3g_1^{-1}(v_{j,i}^2)=g_3(v_{j,i-(-1)}^2)=g_3(v_{j,i+1}^2)=v_{i+1,j}^3
\end{align*} as desired. If $t=1$, then
\begin{align*}
  &g_2g_3(v)=g_2g_3(v_{j,i}^1)=g_2(v_{i,j}^0)=v_{i+1,j}^0\\
 & g_3g_1^{-1}(v_{j,i}^1)=g_3(v_{j,i-(-1)}^1)=g_3(v_{j,i+1}^1)=v_{i+1,j}^0
\end{align*} as desired. Finally assume that $t=0$. Then
\begin{align*}
  &g_2g_3(v_{j,i}^0)=g_2(v_{j,i}^1)=v_{i+1,j}^1\\
 & g_3g_1^{-1}(v_{j,i}^0)=g_3(v_{j,i-(-1)}^0)=g_3(v_{j,i+1}^0)=v_{i+1,j}^1.
\end{align*}
We next show that $g_2g_4(v)=g_4g_2^{-1}(v)$. If $t=3$, then
\begin{align*}
  &g_2g_4(v)=g_2g_4(v_{j,i}^3)=g_2(v_{n-j+1,n-i+1}^0)=v_{n-j+2,n-i+1}^0\\
 & g_4g_2^{-1}(v_{j,i}^3)=g_{4}(v_{(j-1),i}^3)=(v_{n-j+2,n-i+1}^0)
\end{align*} as desired. If $t=2$, then
\begin{align*}
  &g_2g_4(v_{j,i}^2)=g_2(v_{n-j+1,n-i+1}^1)=v_{n-j+2,n-i+1}^1\\
 & g_4g_2^{-1}(v_{j,i}^2)=g_4(v_{j-1,i}^2)=v_{n-j+2,n-i+1}^1
\end{align*}
If $t=1$, then
\begin{align*}
  &g_2g_4(v_{j,i}^1)=g_2(v_{n-j+1,n-i+1}^2)=v_{n-j+2,n-i+1}^0\\
 & g_4g_2^{-1}(v_{j,i}^1)=g_4(v_{j-1,i}^1)=v_{n-j+2,n-i+1}^2
\end{align*} Finally if $t=0$, then
\begin{align*}
  &g_2g_4(v_{j,i}^0)=g_2(v_{n-j+1,n-i+1}^3)=v_{n-j+2,n-i+1}^3\\
 & g_4g_2^{-1}(v_{j,i}^0)=g_4(v_{j-1,i}^0)=v_{n-j+2,n-i+1}^3.
\end{align*}
The proof for $g_1g_2(v)=g_2g_1(v)$, $g_3g_4(v)=g_4g_3(v)$, and $g_1g_4(v)=g_4g_1^{-1}(v)$ is similarly verified.
\end{proof}

Let us recall the definition of semidirect products of groups.
Let $G$ be a group, $H$ and $K$ be its subgroups, $G=HK$, $H$ be
normal in $G$, and $H\cap K=\{1\}$. Then we say $G$ is
\textit{semidirect product} of $H$ by $K$ and we write
$G=H\rtimes K$. It is a well-known fact in group theory that
every element $g$ of $G$ can be written uniquely as $g=hk$ for
some $h\in H$ and $k\in K$. If $K$ is also normal, then we say
$G$ is \textit{direct product} of $H$ and $K$ and we write
$G=H\times K$.

\begin{lemma}\label{lem3}
Keeping the notations of Lemma \ref{lem2}, let $
G=<g_1,g_2,g_3,g_4> $ be the group generated by $ g_{1} , g_{2},
g_{3} $ and $ g_{4}$. Then $G=H\rtimes K$, where $ H=<g_1,g_2>
\cong\Bbb C_n\times\Bbb C_n $ and $ K=<g_3,g_4> \cong\Bbb
C_2\times\Bbb C_2 $.
\end{lemma}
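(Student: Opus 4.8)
The plan is to verify the four conditions that define a semidirect product: that $H=\langle g_1,g_2\rangle$ and $K=\langle g_3,g_4\rangle$ are subgroups of the asserted isomorphism types, that $H\trianglelefteq G$, that $G=HK$, and that $H\cap K=\{1\}$. The relations collected in Lemma \ref{lem2} do almost all of the algebraic work. However, those relations by themselves only show that $H$ and $K$ are \emph{quotients} of $\Bbb C_n\times\Bbb C_n$ and $\Bbb C_2\times\Bbb C_2$; to rule out any collapse I would exploit the fact that, by Lemma \ref{lem1}, each $g_k$ is a permutation of $V(\Gamma)$, so $G\leq\Aut(\Gamma)$ acts \emph{faithfully} on $V(\Gamma)$. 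This faithful action is the tool for computing exact orders.

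First I would pin down $H$. Since $g_1g_2=g_2g_1$ and $g_1^n=g_2^n=1$, the group $H$ is abelian and a quotient of $\Bbb C_n\times\Bbb C_n$. To see its order is exactly $n^2$, I compute the action of a word on a vertex: $g_1^ag_2^b$ sends $v_{j,i}^t$ to $v_{j+b,\,i-a}^t$, so $g_1^ag_2^b=1$ forces $a\equiv b\equiv 0\pmod n$. Thus $(a,b)\mapsto g_1^ag_2^b$ is injective and $H\cong\Bbb C_n\times\Bbb C_n$. For $K$, the relations $g_3^2=g_4^2=1$ and $g_3g_4=g_4g_3$ show $K=\{1,g_3,g_4,g_3g_4\}$ is a quotient of $\Bbb C_2\times\Bbb C_2$. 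To see the four elements are distinct I would read off from Lemma \ref{lem1} how each map permutes the superscript $t$: $g_3$ induces $(0\,1)(2\,3)$ and $g_4$ induces $(0\,3)(1\,2)$, so $g_3g_4$ induces the third nontrivial element $(0\,2)(1\,3)$. As these three permutations are nontrivial and pairwise distinct, $g_3,g_4,g_3g_4$ are distinct and nontrivial, giving $K\cong\Bbb C_2\times\Bbb C_2$.

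Next I would establish normality of $H$ by checking that every generator of $G$ conjugates $H$ into itself; this is automatic for $g_1,g_2$. From $g_1g_4=g_4g_1^{-1}$ and $g_2g_4=g_4g_2^{-1}$ I get $g_4g_ig_4^{-1}=g_i^{-1}\in H$ for $i=1,2$, and from $g_2g_3=g_3g_1^{-1}$ I get $g_3g_2g_3^{-1}=g_1^{-1}\in H$. The one conjugate not directly listed is $g_3g_1g_3^{-1}$; since $g_3^2=1$, conjugation by $g_3$ is an involution, so applying it to $g_3g_2g_3^{-1}=g_1^{-1}$ yields $g_3g_1g_3^{-1}=g_2^{-1}\in H$. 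Hence every generator normalises $H$, so $H\trianglelefteq G$; consequently $HK$ is a subgroup, and as it contains $g_1,g_2,g_3,g_4$ it equals $G$, giving $G=HK$. (These computations also exhibit the conjugation homomorphism $K\to\Aut(H)$ underlying the splitting.)

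Finally, for $H\cap K=\{1\}$ I would reuse the superscript bookkeeping: every element of $H$ fixes the superscript $t$ of each vertex, because $g_1,g_2$ do, whereas each nonidentity element of $K$ permutes $\{0,1,2,3\}$ nontrivially, as computed above. Thus no element other than the identity can lie in both $H$ and $K$, so $H\cap K=\{1\}$. Assembling these facts gives $G=H\rtimes K$ with the stated factors. The main obstacle, as I see it, is not any single relation but ensuring the isomorphism types are exact rather than merely ``quotients of'': the faithful action on $V(\Gamma)$ is precisely what prevents the abstract relations from forcing an unexpected collapse, and the permutation induced on the superscripts is the cleanest single device, serving both to fix the order of $K$ and to force the triviality of $H\cap K$.
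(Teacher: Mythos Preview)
Your proof is correct and follows essentially the same route as the paper: verify the structures of $H$ and $K$ from the relations of Lemma~\ref{lem2}, check $H\trianglelefteq G$ by conjugating generators, argue $G=HK$, and show $H\cap K=\{1\}$ via the concrete action on $V(\Gamma)$. Your version is in fact more careful than the paper's---you explicitly rule out collapse of $H$ and $K$ using the faithful action, and your superscript-permutation device gives a cleaner proof of $H\cap K=\{1\}$ than the paper's appeal to a single vertex---but the overall architecture is the same.
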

\begin{proof}
Put $ H=<g_1,g_2> $ and $ K=<g_3,g_4> $. Since $ g_1g_2=g_2g_1$,
we have $ H=<g_1>\times<g_2> \cong\Bbb C_n\times\Bbb C_n $ and
we have $ g_3g_4=g_4g_3 $ therefore $ K=<g_3>\times<g_4>
\cong\Bbb C_n\times\Bbb C_n $. Now we prove that $G=HK$, $H\lhd
G$ and $H\cap K=\{1\}$, which imply that  $G=H\rtimes K$.
Firstly, we prove that $ H\lhd G $. We know that, $
G=<g_1,g_2,g_3,g_4> $ and $ H=<g_1,g_2> $.
 $$ H\lhd G\Longleftrightarrow  g^{-1}Hg=H \Longleftrightarrow  g^{-1}g_{1}g\in H,~~g^{-1}g_{2}g\in H,~~\forall g\in G $$
It is obvious that, $ g_{1}^{-1}g_{1}g_{1}\in H $. We prove that
$ g_{2}^{-1}g_{1}g_{2}\in H $. By Lemma \ref{lem2},
$g_{1}g_{2}=g_{2}g_{1}$, and so $
g_{2}^{-1}g_{1}g_{2}=g_{2}^{-1}g_{2}g_{1}=g_{1}\in H $. Also we
show that, $ g_{3}^{-1}g_{1}g_{3}\in H.$
$$g_{2}g_{3}=g_{3}g^{-1}_{1} \Rightarrow g_{2}g_{3}g^{-1}_{3}
 =g_{3}g^{-1}_{1}g^{-1}_{3}\Rightarrow g_{2}
 =g_{3}g^{-1}_{1}g^{-1}_{3}\Rightarrow g^{-1}_{2}=g_{3}g_{1}g^{-1}_{3}.$$
Therefore, $ g^{-1}_{2}=g_{3}g_{1}g^{-1}_{3}=
g_{3}^{-1}g_{1}g_{3}\in H.$ According to Lemma \ref{lem2}, we
also have $g^{-1}_{4}g_{1}g_{4}\in H. $ In a similar way, we also
have, $$g^{-1}_{1}g_{2}g_{1}\in H,~ g^{-1}_{2}g_{2}g_{2}\in H,
 ~g^{-1}_{3}g_{2}g_{3}\in H,~g^{-1}_{4}g_{2}g_{4}\in H.$$
So for every generator of $G$ and $H$, like $g$ and $h$, respectively, we
have $ g^{-1}hg\in H,$ Therefore $ H\lhd G.$ We next
prove that $ H\cap K=1.$ Let $ H\cap K\neq1$. Then  $$ 1\neq x \in
H\cap K \Rightarrow x\in H, x\in K \Rightarrow
x=g^{m_{1}}_{1}g^{m_{2}}_{2},~x=g^{m_{3}}_{3}g^{m_{4}}_{4}.$$ If
we choose an arbitrary vertex $v=v_{1,1}^{1}$, then we have
$$ g^{m_{1}}_{1}g^{m_{2}}_{2}(v_{1,1}^{1})=g^{m_{3}}_{3}g^{m_{4}}_{4}(v_{1,1}^{1}).$$
But this is not possible, according to the maps $ g_{i},~1\leq i \leq 4 $. So $ H\cap K=1.$ Finally, we see that, $ G=HK$.
Every member of $G$ can be written in the form of $hk$, where
$h\in H$ and $k\in K$. As an example if we choose $g_{3}g_{1}\in
G$, we will see that $g_{3}g_{1}\in HK,$
$$ g_{3}g_{1}=g_{3}g_{1}g^{-1}_{3}g_{3}\in HK.$$
This complete the proof. Consequently, $G=H\rtimes K$
\end{proof}

Now we are ready to give the main theorem of this paper.

\begin{theorem} \label{cay}
Let $\Gamma=TRC_{4}C_{8}(R)[n,n]$. Then $\Gamma$ is a  Cayley
graph on $ G=<g_1,g_2,g_3,g_4>\cong(C_n\times C_n)\rtimes
(C_2\times C_2)$, where $g_k:V(\Gamma)\rightarrow V(\Gamma)$ are
maps
\begin{eqnarray*}
&g_1:&~~v_{j,i}^t\mapsto v_{j,(i-1)}^t, t=0,1,2,3\\
&g_2:&~~v_{j,i}^t\mapsto v_{(j+1),i}^t, t=0,1,2,3\\
&g_3:&~~ v_{j,i}^3\mapsto v_{i,j}^2,~ v_{j,i}^2\mapsto v_{i,j}^3,~
v_{j,i}^1\mapsto v_{i,j}^0,~
v_{j,i}^0\mapsto v_{i,j}^1\\
&g_4:&~~v_{j,i}^3\mapsto v_{n-j+1,n-i+1}^0,~ v_{j,i}^2\mapsto
v_{n-j+1,n-i+1}^1,~ v_{j,i}^1\mapsto v_{n-j+1,n-i+1}^2,~
v_{j,i}^0\mapsto v_{n-j+1,n-i+1}^3.
\end{eqnarray*}
\end{theorem}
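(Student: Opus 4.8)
The plan is to invoke Sabidussi's theorem (Theorem \ref{Sabidussi}): since $\Gamma$ is a Cayley graph exactly when $\Aut(\Gamma)$ contains a regular subgroup, it suffices to exhibit $G=\langle g_1,g_2,g_3,g_4\rangle$ as a regular subgroup of $\Aut(\Gamma)$. That $G\leq\Aut(\Gamma)$ is already supplied by Lemma \ref{lem1}, and Lemma \ref{lem3} identifies the isomorphism type $G\cong(C_n\times C_n)\rtimes(C_2\times C_2)$, so that $|G|=4n^2$. Since $m=n$, the order of $\Gamma$ is $4mn=4n^2=|G|$. Hence the entire theorem reduces to showing that the action of $G$ on $V(\Gamma)$ is regular.

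Because $|G|=|V(\Gamma)|$, the orbit--stabilizer theorem lets me reduce further: a transitive action of a group whose order equals the number of points is automatically regular, since then every point-stabilizer is forced to be trivial. So I would concentrate solely on proving that $G$ acts transitively on $V(\Gamma)$.

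I would prove transitivity through the semidirect decomposition $G=H\rtimes K$ of Lemma \ref{lem3}. First, the normal subgroup $H=\langle g_1,g_2\rangle\cong C_n\times C_n$ acts within each superscript layer $\{v_{j,i}^t:1\le i,j\le n\}$ by $g_1:v_{j,i}^t\mapsto v_{j,i-1}^t$ and $g_2:v_{j,i}^t\mapsto v_{j+1,i}^t$, i.e. as a pair of commuting shifts of order $n$ moving the index pair $(i,j)$ while fixing $t$; consequently $H$ is already transitive (indeed regular) on each of the four layers $t=0,1,2,3$. Second, the complement $K=\langle g_3,g_4\rangle\cong C_2\times C_2$ mixes the layers: on the superscript alone $g_3$ induces the permutation $(0\,1)(2\,3)$ and $g_4$ induces $(0\,3)(1\,2)$, and these two double transpositions generate the Klein four-group acting transitively on $\{0,1,2,3\}$. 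Combining the two facts, given any target $v_{j,i}^t$ I first apply a suitable $k\in K$ carrying the base vertex $v_{1,1}^0$ into layer $t$, and then an element $h\in H$ correcting the index pair to $(i,j)$; thus $hk$ maps $v_{1,1}^0$ to $v_{j,i}^t$, establishing transitivity and finishing the proof.

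The main obstacle, modest as it is, lies in this transitivity step, and specifically in verifying cleanly that $K$ permutes the four superscript layers transitively while correctly tracking the simultaneous transposition of the indices $i$ and $j$ under $g_3$ and the reflections $i\mapsto n-i+1$, $j\mapsto n-j+1$ under $g_4$. One must confirm that, after $K$ has been used to reach the correct layer, the residual index discrepancy genuinely lies in a single $H$-orbit; this holds precisely because $H$ is transitive on each entire layer, so any index pair in that layer is reachable. Everything else is bookkeeping underwritten directly by Lemmas \ref{lem1}--\ref{lem3}.
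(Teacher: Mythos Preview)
Your proposal is correct and follows essentially the same approach as the paper: reduce to regularity via Sabidussi, use Lemma~\ref{lem1} for $G\leq\Aut(\Gamma)$ and Lemma~\ref{lem3} for $|G|=4n^2=|V(\Gamma)|$, and then verify transitivity. The only difference is presentational: the paper establishes transitivity by writing down, for each pair $(t,t')$, an explicit element $g_1^{a}g_2^{b}g_3^{c}g_4^{d}$ sending $v_{j,i}^t$ to $v_{j',i'}^{t'}$, whereas you argue structurally that $H$ is transitive within each superscript layer and $K$ permutes the four layers transitively, which is a cleaner packaging of the same computation.
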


\begin{proof}
According to Theorem \ref{Sabidussi}, it is sufficient to show
that $G$ is a regular subgroup of the automorphism group of
$\Gamma $. Thus we need to show that $G$ is a transitive subgroup
of $\Aut(\Gamma)$. According to Lemma \ref{lem1}, $ G\leqslant
\Aut(\Gamma) $. Now we show that $ G $ is transitive on $
V(\Gamma)$. Let $v_{j,i}^t$ and $v_{j',i'}^{t'}$ be two different
vertices of $V(\Gamma)$. Assume that $t=t'$. We will show that
there exists $g\in G$ such that $
g(v_{j,i}^{t})=v_{j^{'},i^{'}}^{t} $. Observe that $
G=<g_1,g_2,g_3,g_4> $. So $ g=
g^{r_{1}}_{1}g_{2}^{r_{2}}g_{3}^{r_{3}}g_{4}^{r_{4}}$. Consider
the maps $g_{k}$ that $1\leq k\leq 4$ defined in the Lemma
\ref{lem1}. Notice that $g_{3}$ and $ g_{4}$ realize that $
r_{3}=0$ and $ r_{4}=0$. Therefore, $
g^{r_{1}}_{1}g_{2}^{r_{2}}(v_{j,i}^{t})=v_{j^{'},i^{'}}^{t} $. So
the following computation result in $ g=g_1^{i-i'}g_2^{j'-j} $.
$$ g_{1}^{r_{1}}g_{2}^{r_{2}}(v_{j,i}^{t})= g_{1}^{r_{1}}(v_{j+r_{2},i}^{t})=
v_{j+r_{2},i-r_{1}}^{t}=v_{j^{'},i^{'}}^{t}.$$ So $
r_{2}=j^{'}-j $ and $ r_{1}=i-i^{'}$. Therefore, $
g_1^{i-i'}g_2^{j'-j}(v_{j,i}^t)=v_{j',i'}^t $. Now let $ t\neq t'
$. We prove that,
$$\forall v_{j,i}^t ,v^{t^{'}}_{j',i'} \in V(\Gamma)
~\exists g\in
\Aut(\Gamma)~s.t.~g(v_{j,i}^t)=v^{t^{'}}_{j^{'},i^{'}}.$$
For this purpose, we will prove following equalities:
\begin{align*}
&g_1^{j-i'}g_2^{j'-i}g_3(v_{j,i}^3)=v_{j',i'}^2,\\
&g_1^{-i'-i+1}g_2^{j'+j-1}g_4(v_{j,i}^3)=v_{j',i'}^0,\\
&g_1^{-i'-j+1}g_2^{j'+i-1}g_3g_4(v_{j,i}^3)=v_{j',i'}^1,\\
&g_1^{-i'-i+1}g_2^{j'+j-1}g_4(v_{j,i}^2)=v_{j',i'}^1\\
&g_1^{-i'-j+1}g_2^{j'+i-1}g_4g_3(v_{j,i}^2)=v_{j',i'}^0,\\
&g_1^{j-i'}g_2^{j'-i}g_3(v_{j,i}^1)=v_{j',i'}^0.
\end{align*}
We prove only the first equality, the others can be proved
similarly. To prove that $
g_1^{j-i'}g_2^{j'-i}g_3(v_{j,i}^3)=v_{j',i'}^2,$ note that $
G=<g_1,g_2,g_3,g_4> $. So $ g=
g^{r_{1}}_{1}g_{2}^{r_{2}}g_{3}^{r_{3}}g_{4}^{r_{4}}$. Observe
that the maps $ g_3,g_4 $ realize that $ r_{4}=0 $ and $r_{3}=1$.
So $ g_1^{r_{1}}g_2^{r_{2}}g_3(v^{3}_{j,i})=v_{j',i'}^2$. We
should find $ r_{1}$ and $r_{2}$. It follows that
$$g_1^{r_{1}}g_2^{r_{2}}g_{3}(v^{3}_{j,i})
=g^{r_{1}}_{1}g^{r_{2}}_{2}(v^{2}_{i,j})=g^{r_{1}}_{1}(v_{(i+r_{2}),j}^{2})
=v_{(i+r_{2}),(j-r_{1})}^{2}=v_{j',i'}^2.$$ Therefore, $
i+r_{2}=j^{'}$ and $ j-r_{1}=i^{'}$, and so $ r_{2}=j^{'}-i$ and $
r_{1}=j-i^{'}$. As a result, $ g=g_1^{j-i'}g_2^{j'-i}g_3$.
Thus for every $v_{j,i}^t,v_{j',i'}^{t'}\in V(\Gamma)$
there exists $g\in G$ such that
$g(v_{j,i}^t)=v_{j',i'}^{t'}$, and this means that $G$
is transitive. By Lemma \ref{lem3}, we have  $G=H\rtimes K$, where
$ H=<g_1>\times<g_2> \cong\Bbb C_n\times\Bbb C_n $
 and  $ K=<g_3>\times<g_4> \cong\Bbb C_2\times\Bbb C_2 $. Therefore, $G\cong (C_n\times
C_n)\rtimes (C_2\times C_2)$ and $|G|=4n^2$.
\end{proof}

\end{document}
----------------------------------------------------
\small